\documentclass{amsart}
\usepackage{fix-cm}
\usepackage[final]{microtype}
\usepackage[dvipsnames,svgnames,x11names,hyperref]{xcolor}
\usepackage{dsfont,url,graphicx,verbatim,amssymb,enumerate,stmaryrd,booktabs,lmodern,mathtools,mathabx,nicefrac}
\SetSymbolFont{stmry}{bold}{U}{stmry}{m}{n}
\usepackage[pagebackref,colorlinks,citecolor=Mahogany,linkcolor=Mahogany,urlcolor=Mahogany,filecolor=Mahogany]{hyperref}
\usepackage[capitalize]{cleveref}
\usepackage[mathscr]{euscript}
\usepackage[margin=1.25in]{geometry}
\usepackage{tikz,tikz-cd}

\newtheorem{theorem}{Theorem}[section]
\newtheorem*{theorem*}{Theorem}
\newtheorem{lemma}[theorem]{Lemma}
\newtheorem{proposition}[theorem]{Proposition}

\newtheorem*{corollary*}{Corollary}

\newtheorem{atheorem}{Theorem}

\newtheorem{innercustomgeneric}{\customgenericname}
\providecommand{\customgenericname}{}
\newcommand{\newcustomtheorem}[2]{%
  \newenvironment{#1}[1]
  {%
   \ifdefined\crefalias\crefalias{innercustomgeneric}{#2}\fi
   \renewcommand\customgenericname{#2}%
   \renewcommand\theinnercustomgeneric{##1}%
   \innercustomgeneric
  }
  {\endinnercustomgeneric}%
  \ifdefined\crefname\crefname{#2}{#2}{#2s}\fi
}
\newcustomtheorem{customthm}{Theorem}
\newtheorem{acorollary}[atheorem]{Corollary}

\newcustomtheorem{customconj}{Conjecture}
\theoremstyle{definition}
\newtheorem{definition}[theorem]{Definition}
\newtheorem*{definition*}{Definition}

\newtheorem{notation}[theorem]{Notation}
\newtheorem*{notation*}{Notation}

\theoremstyle{remark}

\newtheorem*{example*}{Example}
\newtheorem*{remark*}{Remark}

\newtheorem{remark}[theorem]{Remark}

\renewcommand{\rm}[1]{{\mathrm{#1}}}

\newcommand{\ol}[1]{{\overline{#1}}}

\newcommand{\fr}[1]{{\mathfrak{#1}}}
\newcommand{\bb}[1]{{\mathds{#1}}}

\providecommand{\GL}{\rm{GL}}
\providecommand{\Span}{\rm{span}}
\providecommand{\St}{\rm{St}}
\providecommand{\Aut}{\rm{Aut}}

\providecommand{\Out}{\rm{Out}}
\providecommand{\Z}{\mathds{Z}}
\providecommand{\Q}{\mathds{Q}}
\providecommand{\T}{\mathcal{T}}

\AtBeginDocument{%
	\def\MR#1{}
}

\title{The Morita classes are nonzero}
\author{Alexander Kupers}
\address{Department of Computer and Mathematical Sciences, University of Toronto Scarborough, 1265 Military Trail, Toronto, ON M1C 1A4, Canada}
\email{a.kupers@utoronto.ca}
\author{Jeremy Miller}
\address{Department of Mathematics, Purdue University, 150 North University, West Lafayette 47907, United States}
\email{jeremykmiller@purdue.edu}

\author{Peter Patzt}
\address{University of Oklahoma, Department of Mathematics, 601 Elm Av, Norman OK, 73019, USA}
\email{ppatzt@ou.edu}

\date{\today}

\begin{document}

\begin{abstract}We prove that all Morita classes $\mu_k \in H_{4k}(\rm{Aut}(F_{2k+2});\bb{Q})$ are nonzero using a novel description of the Hopf algebra structure on Steinberg homology.  We prove analogous results with twisted coefficients. Our results disprove a case of a finiteness conjecture of Kontsevich.\end{abstract}

\maketitle

\vspace{-.5cm}

\tableofcontents

\vspace{-.5cm}

\section{Introduction}

Borinsky--Vogtmann proved that the Euler characteristic of $\Out(F_n)$ grows super-exponentially \cite[Theorem 1.1]{BorinskyVogtman}, so $\Aut(F_n)$ and $\Out(F_n)$ have an enormous amount of rational homology. Despite the ubiquity of homology, no explicit infinite family of nonzero rational homology classes in positive degrees had been found. A similar situation for mapping class groups was called the \emph{dark matter problem} by Farb (see e.g.~Margalit \cite[Problem 9.2]{MargalitProblems}).  

In \cite[p.~391]{Morita}, Morita constructed classes in $H_{4k}(\rm{Out}(F_{2k+2});\bb{Q})$ for $k \geq 1$, which lift to classes $\mu_k \in H_{4k}(\rm{Aut}(F_{2k+2});\bb{Q})$ that we refer to as \emph{Morita classes}; see \cref{def:morita-class} for a definition suited to our arguments. It is known that the first three Morita classes $\mu_1,\mu_2,\mu_3$ are nonzero (Morita \cite[Proposition 6.13]{Morita}, Conant--Vogtmann \cite[Proposition 5.4]{ConantVogtmann}, and Gray \cite[Theorem 3.3.4]{Gray}). It is a well-known conjecture that all of the Morita classes are nonzero (see e.g. Morita \cite[Conjecture 9]{MoritaCoh} or Church--Farb--Putman \cite[Question 15]{ChurchFarbPutman}). Our main result confirms this conjecture.

\begin{atheorem}\label{athm:main} For $k \geq 2$, the images of the Morita classes $\mu_k \in H_{4k}(\rm{Aut}(F_{2k+2});\bb{Q})$ under the map induced by the abelianisation map $\rm{ab}\colon\Aut(F_{2k+2}) \to \GL_{2k+2}(\Z)$ are nonzero. In particular, for $k \geq 1$, the Morita classes and their images in $H_{4k}(\rm{Out}(F_{2k+2});\bb{Q})$ are nonzero.\end{atheorem}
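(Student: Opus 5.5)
We detect the images $\rm{ab}_*(\mu_k) \in H_{4k}(\GL_{2k+2}(\Z);\bb{Q})$ by pushing them into Steinberg homology. We use the map $H_*(\GL_n(\Z);\bb{Q}) \to H_{*-(n-1)}(\GL_n(\Z);\St_n \otimes \bb{Q})$, which comes from the Solomon--Tits/Borel--Serre description of $\St_n$ as the top reduced homology of the Tits building: it is the boundary map for the pair given by the Borel--Serre bordification and its boundary, composed with the identification of the boundary with the building. For $n = 2k+2$ the target degree is $4k-(2k+1) = 2k-1$. The plan is to show that the image of $\rm{ab}_*(\mu_k)$ there is nonzero.

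The first step is to rewrite $\mu_k$ in a form compatible with this map. By \cref{def:morita-class}, $\mu_k$ is built from a $4k$-dimensional class coming from a graph in the spine of Outer space with $2k+2$ loops (equivalently, from an abelian-type cycle obtained via the Kontsevich correspondence with the odd-degree generator of the Lie graph complex). Applying $\rm{ab}$, this gives a class whose Steinberg image can be computed combinatorially in terms of apartment classes. The second step is to use the Hopf algebra structure on $\bigoplus_n H_*(\GL_n(\Z);\St_n \otimes \bb{Q})$. The product comes from block sum, via $\St_a \otimes \St_b \to \St_{a+b}$. The coproduct should come from restriction to parabolics, and its novel description in terms of apartments is what the paper promises. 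I would express the Steinberg image of $\mu_k$ as a specific element, plausibly a product or an iterated bracket of degree-one classes in $H_1(\GL_2(\Z);\St_2)$ or similar small pieces, and then show that some iterated component of the coproduct sends it to a nonzero multiple of a known nonzero primitive element in low rank. For example, the target could be a tensor power of a class for $\GL_2$ or $\GL_3$ that is detectable by an explicit computation or an Euler-characteristic-type argument.

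The main obstacle is this coproduct computation. One must identify the Steinberg image of $\rm{ab}_*(\mu_k)$ exactly, and then check that the relevant coproduct coefficient is not cancelled by signs and symmetries. I would first try to organise the computation as a sum over the ways of splitting the underlying graph's first homology into flags, and show that exactly one summand survives. A natural candidate for the survivor is the one corresponding to the unique maximal splitting compatible with the graph's symmetry.

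Once this nonvanishing holds for all $k \geq 2$, functoriality does the rest. Since $\rm{ab}_*(\mu_k) \neq 0$, we get $\mu_k \neq 0$ for $k \geq 2$. For the statement about $\Out(F_{2k+2})$, note that $\rm{ab}$ factors through $\Out(F_{2k+2}) \to \GL_{2k+2}(\Z)/\{\pm 1\}$, and rationally this quotient does not change homology. So the image of $\mu_k$ in $H_{4k}(\Out(F_{2k+2});\bb{Q})$ also maps to a nonzero class, and hence is nonzero. The case $k=1$ is Morita's result \cite[Proposition 6.13]{Morita}.
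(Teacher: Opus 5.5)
\textbf{The detection map you chose sends the class you want to detect to zero.} By \cref{def:morita-class}, $\mu_k=\phi_*[\Z^{4k}]$, and $\rm{ab}\circ\phi$ identifies $\Z^{4k}$ with the unipotent radical of the maximal parabolic $P_{2k,2}(\Z)$. So $\rm{ab}_*(\mu_k)$ lies in the image of $H_{4k}(P_{2k,2}(\Z))\to H_{4k}(G)$, where $G=\GL_{2k+2}(\Z)$.

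Now $P_{2k,2}(\Z)$ is the stabiliser of a contractible boundary face $e(P)$ of the Borel--Serre bordification $\overline{X}$. The composite $H_*(P_{2k,2}(\Z))\cong H^{P}_*(e(P))\to H^G_*(\partial\overline{X})\to H^G_*(\overline{X})=H_*(G)$ is induced by the inclusion. Hence $\rm{ab}_*(\mu_k)$ lies in the image of $H^G_{4k}(\partial\overline{X})\to H^G_{4k}(\overline{X})$. By exactness of the sequence of the pair, this image is exactly the kernel of your map $H_{4k}(G)\to H^G_{4k}(\overline{X},\partial\overline{X})\cong H_{2k-1}(G;\St(\Q^{2k+2}))$.

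The same conclusion holds if you read your map as the iterated connecting homomorphism of the Solomon--Tits resolution. Already the first connecting map kills everything induced up from maximal parabolics. Dually, your map only detects pairings with interior cohomology, and these vanish on classes supported on the boundary. So the element you set out to prove nonzero is zero, before any coproduct computation.

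Two further problems, apart from this. First, your first step misreads the definition: there is no graph or Kontsevich input, and the parabolic structure of $\phi$ is precisely what must be exploited. Second, the central coproduct computation is left as a hope, and nothing in the plan indicates where $k\ge 2$ is needed.

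\textbf{How the paper argues instead.} It brings in Steinberg homology on the \emph{cohomology} side, through the duality isomorphism $H^{4k}(G)\cong H_{\nu_n-4k}(G;\St^{\det}(\Q^n))$ with $\nu_n-4k=\nu_{2k}+\nu_2$, rather than through a map out of $H_{4k}(G)$.
\begin{enumerate}[(1)]
\item It pairs $\rm{ab}_*(\mu_k)$ with $x=\rm{BE}^{-1}(t_{2k}t_2)$, where $t_m=\rm{BE}(1^\vee)$ and the product is that of \cite{AMP}.
\item Reeder's theorem (\cref{lem:reeder}), together with \cref{s=Delta}, identifies restriction to $P_{2k,2}(\Z)$ with the relevant component of $\Delta_{2k,2}$.
\item \cref{lem:fundclass} identifies the image of $[\Z^{4k}]$ in $H_{4k}(P_{2k,2}(\Z))$ with the dual of fibre integration applied to $1\otimes 1$.
\item Hence the pairing is, up to sign, the $t_{2k}\otimes t_2$-coefficient of $\Delta(t_{2k}t_2)$. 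This coefficient is nonzero by the Hopf algebra compatibility and the primitivity of the $t_m$.
\end{enumerate}
The hypothesis $k\ge 2$ enters exactly here: it gives $2k\neq 2$, so $t_{2k}\otimes t_2$ cannot cancel against $t_2\otimes t_{2k}$.

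Your deduction of the $\Out$ and $k=1$ statements is fine. In fact $\rm{ab}$ factors through $\Out(F_{2k+2})$ directly, since inner automorphisms act trivially on $H_1$.
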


\begin{remark*}This answers a question of Bridson and Vogtmann \cite[Question 32]{BridsonVogtmann} on the image of $\mu_2$ in $H_8(\GL_6(\Z);\Q)$ and the analogous problem of Morita for general $\mu_k$ \cite[Problem 37]{MoritaCoh}. We note the image of $\mu_1$ vanishes in $H_{4}(\GL_{4}(\Z);\bb{Q})$ and in fact $H_{4}(\GL_{4}(\Z);\bb{Q}) = 0$ by work of Lee and Szczarba \cite[Theorem 2]{K45}.\end{remark*}
 
To prove Theorem \ref{athm:main}, we interpret the image of the Morita classes as obtained by a push-pull construction from a span \[\rm{GL}_{2k}(\bb{Z}) \times \rm{GL}_2(\bb{Z}) \longleftarrow P_{2k,2}(\bb{Z}) \longrightarrow \rm{GL}_{2k+2}(\bb{Z})\] with $P_{2k,2}(\bb{Z})$ a certain parabolic subgroup, and pair it against a cohomology class obtained under Bieri--Eckmann duality by a product in homology with Steinberg coefficients. This reduces the result to a computation in the Hopf algebra structure of Ash and the last two authors \cite{AMP}. 

Using the same techniques we also prove a nonvanishing result with twisted coefficients. Let $\Q^{\det}$ denote the determinant representation of $\GL_n(\bb{Z})$ and $\Aut(F_n)$.

\begin{atheorem} \label{odd}
    For $k\geq 2$, there exist classes $\alpha_k \in H_{2k-1}(\Aut(F_{2k});\Q^{\det})$ that map nontrivially to $H_{2k-1}(\GL_{2k}(\Z);\Q^{\det})$.
\end{atheorem}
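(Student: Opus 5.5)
We construct $\alpha_k$ by the same push--pull mechanism as for the Morita classes, but starting from the $(2k-2)+2$ splitting and the determinant twist. Consider the span
\[\GL_{2k-2}(\Z)\times\GL_2(\Z) \longleftarrow P_{2k-2,2}(\Z) \longrightarrow \GL_{2k}(\Z),\]
and its lift to automorphism groups, $\Aut(F_{2k-2})\times\Aut(F_2)\leftarrow \Aut(F_{2k-2}*F_2\text{ fixing the splitting flag}) \to \Aut(F_{2k})$.

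\textbf{Step 1: the input class.} Start from a low-degree class with $\Q^{\det}$ coefficients that exists on the free-group side. Following Morita's graph-theoretic construction, the odd analogue is built from the class in $H_1(\Aut(F_2);\Q^{\det})$ coming from $\Aut(F_2)\to\GL_2(\Z)$. Its image is nonzero, since $H_1(\GL_2(\Z);\Q^{\det})\neq 0$ by an Eisenstein/modular symbol computation, or equivalently by Bieri--Eckmann duality with $\St_2\otimes\det$. Iterating the block-sum/parabolic transfer produces $\alpha_k\in H_{2k-1}(\Aut(F_{2k});\Q^{\det})$; the degree rises by $2$ each time the rank rises by $2$. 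We need only check that the determinant twist restricts compatibly to the Levi factors, which holds because $\det$ is multiplicative on block matrices. Naturality of push--pull under $\mathrm{ab}$ then shows that the image of $\alpha_k$ in $H_{2k-1}(\GL_{2k}(\Z);\Q^{\det})$ equals the corresponding push--pull of classes in the $\GL$'s.

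\textbf{Step 2: detection.} To show this image is nonzero, we pair it against a cohomology class. By Bieri--Eckmann duality, $H^{2k-1}(\GL_{2k}(\Z);\Q^{\det})$ is dual to $H_{\nu-(2k-1)}(\GL_{2k}(\Z);\St_{2k}\otimes\Q^{\det})$, where $\nu=\binom{2k}{2}$ is the virtual cohomological dimension. We choose the dual class as a product, in the Hopf algebra of Ash--Miller--Patzt \cite{AMP}, of classes on $\GL_2$ and smaller blocks with twisted Steinberg coefficients; the dual of the $\GL_2$ piece is the nonzero class from Step 1.

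The paper's new description of the coproduct in this Hopf algebra identifies the pairing of a push--pull along $P_{a,b}$ with a product as the pairing of the coproduct component of type $(a,b)$. This reduces the evaluation to a computation in the Steinberg Hopf algebra with sign-twisted coefficients. There the relevant coproduct components can be computed by shuffle-type formulas on apartment classes, and we need to exhibit one nonzero coefficient.

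\textbf{Main obstacle.} The hardest step is this final nonvanishing computation. The signs introduced by $\det$ could cause the shuffle terms to cancel, just as the untwisted analogue vanishes in some low ranks; this is why we require $k\ge 2$. First, I would compute the coproduct on the product of $k$ copies of the basic $\GL_2$ class and show that the coefficient of the diagonal term is $\pm k!$ or similar, hence nonzero over $\Q$. Second, I would check that the base case $k=2$ is not killed by a dimension accident, for instance by a rank/degree constraint in $H_3(\GL_4(\Z);\Q^{\det})$.
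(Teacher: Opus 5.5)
There is a genuine gap: the push--pull you set up can only produce zero, and the degree count does not work. First, your base input does not exist, because $H_*(\GL_2(\Z);\Q^{\det})=0$ in every degree. $H_0$ vanishes since $\det$ takes the value $-1$, and the vcd is $1$. By transfer, $H_1(\GL_2(\Z);\Q^{\det})$ is a direct summand of $H_1(\mathrm{SL}_2(\Z);\Q)=0$, since the abelianisation of $\mathrm{SL}_2(\Z)$ is $\Z/12$. So whatever class you take in $H_1(\Aut(F_2);\Q^{\det})$, its image in $\GL_2(\Z)$ is zero. This is exactly the case $a=b=1$ that the paper excludes: there the $t_1\otimes t_1$-coefficient of $\Delta(t_1t_1)$ is $1+(-1)^{(\nu_1+1)^2}=0$.

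Second, push--pull along $P_{2k-2,2}(\Z)$ raises degree by the rank $ab=4k-4$ of the unipotent radical, not by $2$. Since $4k-4>2k-1$ for all $k\ge 2$, it can never land in $H_{2k-1}$.

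Third, even ignoring degrees, when $a$ and $b$ are both even the Levi factor acts on $H_{ab}(\Z^{ab})$ by $\det(A)^b\det(C)^{-a}=1$. So the input group for a push--pull into $\Q^{\det}$-coefficients is $H_*(\GL_{2k-2}(\Z);\Q^{\det})\otimes H_*(\GL_2(\Z);\Q^{\det})=0$.

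Finally, you never construct an umkehr map on the $\Aut$ side: the kernel of your $\Aut$-parabolic onto $\Aut(F_{2k-2})\times\Aut(F_2)$ is not $\Z^{4k-4}$, so naturality under $\mathrm{ab}$ is unjustified. The decisive Hopf-algebra computation is also left as ``$\pm k!$ or similar'', so nonvanishing is never established.

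The missing idea is to take an odd--odd splitting whose unipotent radical has rank exactly $2k-1$, namely $\{a,b\}=\{1,2k-1\}$. You should lift only that radical, not a parabolic, to $\Aut(F_{2k})$. The commuting transvections $x_i\mapsto x_{2k}x_i$ for $1\le i\le 2k-1$ give $\psi\colon\Z^{2k-1}\to\Aut(F_{2k})$, whose composite with $\mathrm{ab}$ is exactly this radical.

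Because $a$ and $b$ are odd, the Levi acts on $H^{2k-1}(\Z^{2k-1})$ through $\det$; this is where the $\Q^{\det}$ twist comes from. The image of $[\Z^{2k-1}]$ in $H_{2k-1}(P_{a,b}(\Z);\Q^{\det})$ is, up to sign, the dual of fibre integration applied to the degree-$0$ class $1\otimes 1$.

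You then pair with $\mathrm{BE}^{-1}(t_at_b)\in H^{2k-1}(\GL_{2k}(\Z);\Q^{\det})$ and use the Bieri--Eckmann description of $\Delta_{a,b}$. This reduces nonvanishing to the $t_a\otimes t_b$-coefficient of $\Delta(t_at_b)$, which is nonzero because $t_a$ and $t_b$ are primitive and $a\neq b$. No induction on $k$ and no nonzero $\GL_2$ input is needed.
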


A consequence of Theorem \ref{athm:main} is the resolution of a case of a conjecture by Kontsevich. In \cite{Kontsevich}, he defined the notions of even and odd graph homology associated to the commutative, associative, and Lie cyclic operads. In his main theorem \cite[Theorem 1.1]{Kontsevich}, he relates the latter two even graph homology groups to the cohomology groups of the mapping class groups of surfaces and to the cohomology groups of $\Out(F_n)$, respectively. In all six cases, he made finite-dimensionality conjectures about these graph homology groups \cite[Section 7C]{Kontsevich}. 

In the even Lie case, Kontsevich's finite-dimensionality conjecture can be reformulated as the statement 
\[H_{2n-3-i}(\Out(F_n);\Q) = 0 \text{ for }n\gg i;\] see e.g.~Brun--Willwacher \cite[Theorem 5]{BrunWillwacher}.
This can be interpreted as the eventual vanishing of the codimension-$i$ homology using that $\Out(F_n)$ has virtual cohomological dimension is $2n-3$ by work of Culler and Vogtmann \cite{CV}. Our Theorem \ref{athm:main} show this conjecture is false for $i=1$.

\begin{acorollary}
For $i=1$, $H_{2n-3-i}(\Out(F_n);\Q) \neq 0$ for all even $n\ge 2$.    
\end{acorollary}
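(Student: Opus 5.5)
The plan is to deduce the corollary directly from \cref{athm:main}. For even $n \geq 4$, write $n = 2k+2$ with $k \geq 1$. Then
\[2n-3-1 = 2(2k+2)-4 = 4k.\]
\Cref{athm:main} says that the image of the Morita class $\mu_k$ in $H_{4k}(\Out(F_{2k+2});\Q)$ is nonzero. Since $4k = 2n-4$, this gives $H_{2n-4}(\Out(F_n);\Q) \neq 0$, which is exactly the case $i=1$ of the statement. The only bookkeeping needed is that the Morita classes live in $\Aut(F_{2k+2})$ and we use their images in $\Out$. \Cref{athm:main} asserts nonvanishing of these images explicitly for all $k \geq 1$, so nothing further is required. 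For $k=1$ this is Morita's original computation, which the theorem includes.

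The remaining case is $n=2$, where $2n-4 = 0$. Here $H_0(\Out(F_2);\Q) = \Q \neq 0$ trivially, since $\Out(F_2) \cong \GL_2(\Z)$ is a group and its zeroth homology with trivial rational coefficients is $\Q$. Combining the two cases covers all even $n \geq 2$.

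The main obstacle is \cref{athm:main} itself, not this deduction. The corollary is a pure degree count once one has the nonvanishing of the images in $\Out(F_{2k+2})$. One small point deserves attention. Nonvanishing in $\Out$ follows from \cref{athm:main}'s statement about $\GL_{2k+2}(\Z)$ when $k \geq 2$, because the abelianisation $\Aut(F_n) \to \GL_n(\Z)$ factors through $\Out(F_n)$, as inner automorphisms act trivially on $\Z^n$. Hence a nonzero image in $\GL$ forces a nonzero image in $\Out$. I will state this factorization explicitly so the corollary rests only on the first sentence of \cref{athm:main} together with the known case $k=1$.
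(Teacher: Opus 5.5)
Your proposal is correct and is essentially the paper's argument: the corollary is just the degree count $2n-4=4k$ for $n=2k+2$, combined with the nonvanishing of the image of $\mu_k$ in $H_{4k}(\Out(F_{2k+2});\Q)$ from \cref{athm:main} (with $k=1$ being Morita's earlier result). Your separate treatment of $n=2$ via $H_0(\Out(F_2);\Q)=\Q$, and your remark that $\mathrm{ab}$ factors through $\Out(F_n)$, are correct details that the paper leaves implicit.
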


The odd Lie case of Kontsevich's conjecture is equivalent to the statement $H_{2n-3-i}(\Out(F_n);\Q^{\det} ) = 0$ for $n \gg i$. This is not contradicted by Theorem \ref{odd}, and still open to the knowledge of the authors.

    \begin{remark*}The even and odd commutative cases of Kontsevich's conjecture true;  see Campos--Idrissi--Lambrechts--Willwacher    
    \cite[Lemma A.10]{CILW} and Willwacher \cite[Theorem 1.1]{Willwacher}. The even associative case is false by work of Chan, Galatius, and Payne \cite[Theorem 1.1]{CGP}, who proved the codimension-1 homology of mapping class group does not eventually vanish.
\end{remark*}

\begin{notation*}Throughout the remainder of this paper we will work rationally, e.g.~take (co)homology with $\bb{Q}$-coefficients and rationalise the Steinberg modules, and suppress this from the notation.
\end{notation*}

\subsection*{Acknowledgments} AK acknowledges the support of the Natural Sciences and Engineering Research Council of Canada (NSERC) [funding reference number 512156 and 512250]. JM acknowledges the support of National Science Foundation (NSF) grant DMS2504473 as well as a Simons Foundation Travel Support for Mathematicians grant. PP acknowledges the support of National Science Foundation (NSF) grant DMS2202943 as well as a Simons Foundation Travel Support for Mathematicians grant. We thank Christian Kremer, Daniil Rudenko, Ismael Sierra, Dan Petersen, Karen Vogtmann, Thomas Willwacher, and Jennifer Wilson for helpful conversations. We thank Urshita Pal for helpful discussions related to \cref{s=Delta} and Sam Payne for pointing out that our results contradict Kontsevich's finite-dimensionality conjecture.  

\section{Bieri--Eckmann duality}

In this section, we give a short recollection of Bieri--Eckmann duality \cite{BieriEckmann} and provide some results that we will need in the subsequent sections.

\begin{definition}
    A (discrete) group $G$ is a \emph{rational (Bieri--Eckmann) duality group of dimension $n$} if there is a $\Q[G]$-module $D$ and an element $x\in H_n(G;D)$ such that the cap product
    \[ x \cap - \colon H^i(G;A) \longrightarrow H_{n-i}(G;A \otimes D)
    \]
    is an isomorphism for all $\Q[G]$-modules $A$ and $i\in \Z$. In this case, $D$ is called the \emph{(rational) dualising module} of $G$.
\end{definition}

It is immediate that $n$ is the rational cohomological dimension of $G$ and that $D$ is uniquely determined by the isomorphism $D \cong H^n(G;\Q[G])$. For short exact sequences of groups, Bieri--Eckmann prove the following:

\begin{theorem}[{\cite[Theorem 3.5]{BieriEckmann}}] \label{thm:be-ses}
    Consider a short exact sequence of groups
    \[ 1 \longrightarrow K \longrightarrow G \longrightarrow Q \longrightarrow 1.\]
    Assume that $K$ and $Q$ are rational duality groups of dimensions $d$ and $d'$ with dualising modules $D$ and $D'$, respectively. Then $G$ is a rational duality group of dimension $d+d'$ with dualising module $D \otimes D'$, where $G$ acts on $D \cong H^d(K;\Q[K])$ via conjugation and on $D'$ through $G \to Q$.
\end{theorem}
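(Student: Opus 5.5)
The natural route is the Lyndon--Hochschild--Serre spectral sequence together with the characterisation of rational duality groups through cohomology with group-ring coefficients. Recall the criterion of Bieri--Eckmann: a group $\Gamma$ of finite rational cohomological dimension, of type $FP$ over $\Q$, is a rational duality group of dimension $n$ exactly when $H^i(\Gamma;\Q[\Gamma])=0$ for $i\neq n$ and $H^n(\Gamma;\Q[\Gamma])$ is $\Q$-flat; the dualising module is then $D=H^n(\Gamma;\Q[\Gamma])$. I would use this characterisation, which is equivalent to the cap product definition above.

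\emph{First}, $G$ inherits the finiteness condition ($FP$ over $\Q$) from $K$ and $Q$: splice a finite-type projective resolution for $Q$ with one for $K$, as in the standard argument for extensions. \emph{Next}, run the LHS spectral sequence
\[E_2^{p,q}=H^p\bigl(Q;H^q(K;\Q[G])\bigr)\Longrightarrow H^{p+q}(G;\Q[G]).\]
As a $\Q[K]$-module, $\Q[G]\cong \bigoplus_{Q}\Q[K]$ after choosing coset representatives. Since $K$ is $FP$, cohomology commutes with direct sums, so
\[H^q(K;\Q[G])\cong H^q(K;\Q[K])\otimes_{\Q[K]}\Q[G].\]
This vanishes for $q\neq d$. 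For $q=d$ it is $D\otimes \Q[Q]$ in a form which, after untwisting, is a coinduced-type $Q$-module: explicitly $D\otimes_\Q \Q[Q]$ with $G$ acting diagonally (conjugation on $D$, translation on $\Q[Q]$). The spectral sequence therefore collapses, and
\[H^{n}(G;\Q[G])\cong H^{d'}(Q;D\otimes\Q[Q]).\]

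\emph{Then} the aim is $H^{p}(Q;D\otimes \Q[Q])\cong D\otimes H^p(Q;\Q[Q])$. Since $D$ is $\Q$-flat, in fact free as a $\Q$-vector space, $D\otimes\Q[Q]$ with the diagonal action is isomorphic to $D_{\mathrm{triv}}\otimes\Q[Q]$ via the untwisting map $x\otimes q\mapsto q^{-1}x\otimes q$. At this point one has to be careful that only $Q$ acts, while $K$ acts trivially on the module after taking $K$-cohomology, so the conjugation action descends appropriately. $FP$-ness of $Q$ then lets the direct sum $D_{\mathrm{triv}}\otimes\Q[Q]=\bigoplus\Q[Q]$ pass through cohomology. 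This gives vanishing for $p\neq d'$ and $D\otimes D'$ in degree $d'$, which is $\Q$-flat. Finally, I would identify the residual $G$-action on $D\otimes D'$ as conjugation on $D$ tensored with the action on $D'$ through $Q$.

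The main obstacle is this bookkeeping of module structures: checking that $H^d(K;\Q[G])$ really is $D\otimes \Q[Q]$ equivariantly, and that the untwisting isomorphism is compatible with the right $G$-module structures that produce the final dualising module. The rest is formal once $FP$ is in place. Since the statement is cited from \cite[Theorem 3.5]{BieriEckmann}, in the paper one may simply refer there.
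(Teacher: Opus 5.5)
The paper does not prove this statement; it only cites Bieri--Eckmann. Your outline is the standard argument for that result: type $FP$ over $\Q$ passes to extensions, then the Lyndon--Hochschild--Serre spectral sequence with coefficients $\Q[G]$, then the characterisation of rational duality groups by $H^*(-;\Q[-])$. The vanishing of $H^q(K;\Q[G])$ for $q\neq d$ and the resulting collapse are fine.

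However, the step you single out as the crux is set up incorrectly. You describe $H^d(K;\Q[G])$ as $D\otimes_\Q\Q[Q]$ with $G$ acting diagonally, and then untwist by $x\otimes q\mapsto q^{-1}x\otimes q$. But $D$ is a $G$-module, not a $Q$-module. Under the conjugation action, $k\in K$ acts on $D$ by $x\mapsto x\cdot k^{-1}$, the dualising action of $K$, which is nontrivial in general (e.g.\ for $K$ free of rank $\geq 2$). Three things follow:
\begin{itemize}
\item the conjugation action on $D$ does not descend to $Q$, so $q^{-1}x$ is undefined;
\item a diagonal action of this kind cannot be the LHS coefficient module, on which $K$ acts trivially;
\item you are conflating two commuting structures on $H^d(K;\Q[G])$: the left $Q$-action that enters the spectral sequence, and the right $G$-action from right multiplication on $\Q[G]$, which is what produces the dualising module.
\end{itemize}

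The correct bookkeeping goes as follows.
\begin{enumerate}
\item Since $K$ is $FP$ and $\Q[G]$ is $\Q[K]$-free, $H^d(K;\Q[G])\cong D\otimes_{\Q[K]}\Q[G]$. The LHS action of $g\in G$ is $g\cdot(x\otimes h)=(g\star x)\otimes gh$, where $\star$ denotes the conjugation action on $D$.
\item Put $x\cdot h := h^{-1}\star x$; this extends the right $K$-module structure of $D$. Hence $x\otimes h\mapsto x\cdot h\otimes\overline{h}$ is a well-defined isomorphism $D\otimes_{\Q[K]}\Q[G]\to D\otimes_\Q\Q[Q]$. Well-definedness is exactly where the tensor product over $\Q[K]$ absorbs the $K$-action.
\item Under this isomorphism, $Q$ acts on the left only by translation on $\Q[Q]$, and trivially on $D$. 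So the LHS coefficient module is already a direct sum of copies of $\Q[Q]$, and no further untwisting is needed. The right $G$-action becomes the diagonal one.
\item $FP$ for $Q$ then gives $H^p(Q;D\otimes\Q[Q])\cong D\otimes H^p(Q;\Q[Q])$, compatibly with the right $G$-actions.
\item Right multiplication by $g$ is a $\Q[G]$-linear endomorphism of $\Q[G]$. So naturality of the spectral sequence in the coefficients transports step 4 to $H^{d+d'}(G;\Q[G])\cong D\otimes D'$, with exactly the $G$-action in the statement.
\end{enumerate}
With this correction your proof goes through.
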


We may define a fibre integration map from the cohomology of $G$ to the cohomology of $Q$ by using Bieri--Eckmann duality twice:

\begin{definition}\label{def:fibreint}
    Consider a short exact sequence of groups
    \[ 1 \longrightarrow K \longrightarrow G \stackrel{q}\longrightarrow Q \longrightarrow 1.\]
    Assume that $K$ and $Q$ are rational duality groups of dimensions $d$ and $d'$ with dualising modules $D$ and $D'$, respectively. Let $A$ be a $\Q[Q]$-module. Define the \emph{fibre integration map} 
    \[{\textstyle\int_q}\colon H^i(G;A) \longrightarrow H^{i-d}(Q;D_K \otimes A)\] as the dashed left vertical map in
\[\begin{tikzcd} 
H^{i}(G;A) \dar[dashed,swap]{\int_q}   \rar{\cong}[swap]{\rm{BE}} &[10pt] H_{d+d'-i}(G;D \otimes D' \otimes A )\dar{q_*} \\[-5pt]
  H^{i-d}(Q;D_K \otimes A) \rar{\cong}[swap]{\rm{BE} } & 
 H_{d+d'-i}(Q;D_K \otimes D' \otimes A).
 \end{tikzcd}
\]
\end{definition}

Its restriction from $H^d(G)$ to $H^d(K)$ can be described via the fibre integration map as described in the following lemma.

\begin{lemma}\label{lem:be-rel-pullback} Consider a short exact sequence of groups 
    \[ 1 \longrightarrow K \stackrel{\rm{inc}}\longrightarrow G \stackrel{q}\longrightarrow Q \longrightarrow 1.\]
    Assume that $K$ and $Q$ are rational duality groups of dimensions $d$ and $d'$ with dualising modules $D$ and $D'$, respectively. Let $A$ be a $\Q[Q]$-module.  Then 
\[\begin{tikzcd} H^d(G;A) \dar[swap]{\int_q} \rar{\rm{inc}^*} & H^d(K) \otimes A   \\[-5pt]
H^{0}(Q;D_K\otimes A) \rar{\rm{BE}} & H^0(Q;H^d(K) \otimes A)\uar[hook]\end{tikzcd}\]
commutes up to a sign.
\end{lemma}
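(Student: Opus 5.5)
The plan is to compare the two sides through the Lyndon--Hochschild--Serre spectral sequences of the extension, in cohomology for the top row and in homology for the right-hand column of \cref{def:fibreint}, and to use that the Bieri--Eckmann isomorphism of \cref{thm:be-ses} is compatible with these spectral sequences. Throughout, $D_K = H_0(K;D)$ denotes the $K$-coinvariants of $D$. The lower horizontal map labelled $\rm{BE}$ in the lemma is induced by the duality isomorphism for $K$ with trivial coefficients, $H^d(K) \cong H_0(K;D) = D_K$. It is $Q$-equivariant, so it induces the map on $Q$-invariants.

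First I identify each path in the diagram with an edge map. Since $K$ has rational cohomological dimension $d$, the cohomological spectral sequence $E_2^{p,q} = H^p(Q;H^q(K;A))$ vanishes for $q>d$. Hence the edge homomorphism $H^d(G;A) \to E_2^{0,d} = H^0(Q;H^d(K)\otimes A)$ is defined. Composed with the inclusion of invariants into $H^d(K)\otimes A$, it is the restriction $\rm{inc}^*$; this is the standard description of the edge map, using $H^d(K;A) = H^d(K)\otimes A$ since $K$ acts trivially on $A$. On the homology side, the spectral sequence $E^2_{p,q} = H_p(Q;H_q(K;D\otimes D'\otimes A))$ vanishes for $q<0$. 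Its edge map $H_{d'}(G;D\otimes D'\otimes A) \to E^2_{d',0} = H_{d'}(Q;D_K\otimes D'\otimes A)$ is exactly $q_*$, the right vertical map of \cref{def:fibreint} with $i=d$.

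Next I invoke Bieri--Eckmann's proof of \cref{thm:be-ses}. There the fundamental class $x_G \in H_{d+d'}(G;D\otimes D')$ is built so that cap product with it induces a map of spectral sequences
\[E_2^{p,q} = H^p(Q;H^q(K;A)) \longrightarrow H_{d'-p}\bigl(Q;H_{d-q}(K;D\otimes A)\otimes D'\bigr).\]
This map is, up to a sign $(-1)^{pd}$ or similar coming from reordering $x_K$ and $x_Q$, the composite of $K$-duality (cap with $x_K$) in the coefficients followed by $Q$-duality (cap with $x_Q$). Under this correspondence the corner $(p,q)=(0,d)$ goes to the corner $(d',0)$. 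Since both are edge terms, the global isomorphism $\rm{BE}$ on $H^d(G;A)$ covers the identification of these corners, so the edge maps commute with the duality isomorphisms.

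Reading off the corner: the coefficient identification there is $H^d(K)\otimes A \cong D_K\otimes A$ via cap with $x_K$, which is the lower $\rm{BE}$ map of the lemma. The remaining step is $Q$-duality in degree $0$, $H^0(Q;D_K\otimes A)\cong H_{d'}(Q;D_K\otimes D'\otimes A)$, which is precisely how $\int_q$ is defined. Chasing $H^d(G;A)$ around both paths therefore yields the same element of $H^0(Q;H^d(K)\otimes A)$ up to sign, which gives the lemma. The main obstacle is the middle step: checking that cap product with $x_G$ is genuinely filtration-compatible and induces the tensor product of the $K$- and $Q$-duality maps on $E_2$. I would handle this by taking a product of projective resolutions, filtering by $Q$-degree, and noting that $x_G$ is represented in filtration $d'$ with image $x_Q\otimes x_K$ in $E^2_{d',d}$. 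The cap product then respects the filtrations with the stated effect on associated gradeds, with the sign coming from the Koszul rule.
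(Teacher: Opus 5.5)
Your proposal is correct and follows the paper's proof. Both identify $\mathrm{inc}^*$ and $q_*$ with the edge maps of the cohomological and homological Lyndon--Hochschild--Serre spectral sequences, and both use that cap product with the fundamental class gives, up to sign, an isomorphism of spectral sequences $E_2^{p,q}\to \ol{E}{}^2_{d'-p,d-q}$ converging to the Bieri--Eckmann isomorphism, so that duality commutes with the edge maps. The only difference is that the paper takes this compatibility step from Bieri's Lemmas 1.3.1 and 1.3.2, whereas you sketch it by filtering a product resolution.
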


\begin{proof}
Consider the two Lyndon--Hochschild--Serre spectral sequences
\begin{align*}E^{pq}_2 = H^p(Q; H^q(K)\otimes A) &\implies H^{p+q}(G;A) \\
\ol{E}^2_{pq} = H_p(Q;H_q(K;D) \otimes D'\otimes A) &\implies H_{p+q}(G;D\otimes D' \otimes A).\end{align*}
We first observe that $\rm{inc}^*$ agrees with the composition
\[ H^d(G;A) \xrightarrow{\rm{edge}} H^0(Q;H^d(K) \otimes A) \xhookrightarrow{\phantom{edge}} H^d(K) \otimes A  \]
and $q_*$ is given by 
\[H_{d'}(G;D \otimes D' \otimes A)\xrightarrow{\ol{\rm{edge}}} H_{d'}(Q;H_0(K;D) \otimes D'\otimes A), \]
where $\rm{edge}$ and $\ol{\rm{edge}}$ are the edge maps of the above two spectral sequences, respectively. (This does not require the groups to be duality groups.) It thus suffices to show the following square commutes
\[\begin{tikzcd} H^d(G;A) \dar[swap]{\rm{BE}} \rar{\rm{edge}} & H^0(Q;H^d(K) \otimes A) \dar{\rm{BE}} \\[-5pt]
H_{d'}(G;D\otimes D' \otimes A) \rar{\ol{\rm{edge}}} & H_{d'}(Q;H_0(K;D)\otimes D' \otimes A).\end{tikzcd}\]
As the Bieri--Eckmann isomorphisms are defined by the cap products, \cite[Lemma 1.3.2]{Bieri} (applied with $N=K$, $G/N = Q$, $A = A$, $C = D \otimes D'$, restricting the top-left corner to $\bb{Q}\{x\} \otimes E^{p,q}_2$ with $x$ the generator of $\smash{\ol{E}{}^2_{d',d}}$)  states that
\[E^{pq}_2 = H^p(Q;H^q(K) \otimes A) \overset{\rm{BE}}\longrightarrow H_{d'-p}(Q; H_{d-q}(K;D) \otimes D' \otimes A)  = \ol{E}^2_{d'-p,d-q}  \]
gives rise to an isomorphism of spectral sequences when multiplied with the correct sign. Moreover, by \cite[Lemma 1.3.1]{Bieri} (applied to same data as above), this isomorphism on the spectral sequences converges to the Bieri--Eckmann isomorphism on the abutment. Thus the Bieri--Eckmann isomorphisms commute with the edge morphisms.
\end{proof}

\section{A geometric description of the coproduct in Steinberg homology} 
In this section, we give a geometric model of a coproduct introduced in \cite[Definition 2.7]{AMP}. Brown, Galatius, Chan, and Payne \cite{BCGP} independently constructed a coproduct using different methods.

\subsection{The coproduct in Steinberg homology} We start with some recollections. For $V$ an $n$-dimensional $\Q$-vector space, recall that the Tits building $\T(V)$ is the order complex of the poset of proper nonzero subspaces of $V$ ordered by inclusion. The Solomon--Tits Theorem says that $\T(V)$ is $(n-2)$-spherical and we define the \emph{Steinberg module} of $\GL(V)$ as $\St(V) \coloneq \widetilde H_{n-2}(\T(V))$ for $n>0$ and define it to be $\Q$ for $n=0$. The Steinberg module is generated by apartment classes: let $v_1 , \ldots , v_n$ be a basis of $V$, then there is an associated \emph{apartment class} in $\St(V)$ defined by the formula (viewed as a top-degree reduced cellular chain $\widetilde C_{n-2}(\T(V))$)
\[
  [\![v_1,\ldots,v_n]\!] \coloneq \sum_{\sigma \in \fr{S_n}} (-1)^{\sigma} \Span(v_{\sigma (1)}) < \Span(v_{\sigma (1)},v_{\sigma (2)})  < \ldots < \Span(v_{\sigma (1)},\ldots, v_{\sigma (n-1)}).  
\]

\begin{definition}
Let $V$ be an $n$-dimensional $\Q$-vector space and $\chi\colon \Z^\times \to \Q^\times$ a group homomorphism, i.e.~either the inclusion or the trivial map. We denote by $\Q^\chi$ the trivial $\GL(V)$-module if $\chi$ is the trivial map and $\Lambda^n V$ if $\chi$ is the inclusion. Let $\St^{\chi}(V)$ be $\St(V)\otimes \Q^\chi$. If $\chi$ is the inclusion, we will also write $\St^{\det}(V)$ for $\St^{\chi}(V)$ and $\Q^{\det}$ for $\Q^\chi$, and if $\chi$ is the trivial map, we simply write $\St(V)$ for $\St^{\chi}(V)$ and $\Q$ for $\Q^\chi$.\end{definition}

\begin{notation}By abuse of notation, we also use the notation $[\![v_1,\ldots,v_n]\!]$ for the element of $\St^{\det}(V)$ given by tensoring the same-named apartment class with $v_1 \wedge \cdots \wedge v_n$.\end{notation}

We now describe the coproduct map. Let $\rm{Sh}_{a,b} \subseteq \fr{S}_n$ denote the subset of the symmetric group given by $a,b$-shuffles. For $a+b=n$, we define the map (well-defined by \cite[Lemma 2.10]{AMP})
\begin{align*}\Delta_{a,b} \colon \St^{\chi}(\Q^n)  &\longrightarrow  \bigoplus_{V \subseteq \Q^n , \,\dim(V)=a} \St^{\chi}(V)   \otimes \St^{\chi}(\Q^n/V) \\
[\![v_1,\ldots,v_n]\!] &\longmapsto \sum_{\sigma \in \rm{Sh}_{a,b}} (-1)^\sigma [\![v_{\sigma(1)},\ldots,v_{\sigma(a)}]\!] \otimes [\![v_{\sigma(a+1)},\ldots,v_{\sigma(n)}]\!].\end{align*}
We next introduce the following parabolic subgroup of $\rm{GL}_n(\bb{Z})$ 
\[P_{a,b}(\bb{Z}) \coloneq \left\{\begin{bmatrix} A & B \\ 0 & C \end{bmatrix} \,\,\middle \vert \,\,\text{with $A \in \rm{GL}_{a}(\bb{Z})$, $B \in \rm{Mat}_{a,b}(\bb{Z})$, $C \in \rm{GL}_{b}(\bb{Z})$}\right\}.\] 
There is an action of $P_{a,b}(\Z)$ on
$\St^{\chi}(\Q^a) \otimes \St^{\chi}(\Q^b)$ through the projection map \[\rm{pr} \colon P_{a,b}(\Z) \longrightarrow \GL_a(\Z) \times \GL_b(\Z),\] which appears when applying Shapiro's lemma to obtain the isomorphism
\[H_\ast \Big(\GL_n(\Z) ; \bigoplus_{V \subseteq \Q^n, \, \dim(V)=a} \St^{\chi}(V)   \otimes \St^{\chi}(\Q^n/V) \Big) \cong H_*\Big(P_{a,b}(\Z) ; \St^{\chi}(\Q^a) \otimes \St^{\chi}(\Q^b)\Big).\]
Since the action factors over the projection, we also have an induced map 
\[H_*(P_{a,b}(\Z) ; \St^{\chi}(\Q^a) \otimes \St^{\chi}(\Q^b)) \longrightarrow H_*(\GL_a(\Z) \times \GL_b(\Z) ; \St^{\chi}(\Q^a) \otimes \St^{\chi}(\Q^b)).\]
The composition of $\Delta_{a,b}$ with these maps and the K\"unneth map endows the bigraded $\bb{Q}$-vector space 
\[\bigoplus_{n,i\ge 0} H_i(\GL_n(\Z) ; \St^{\chi}(\Q^n))\] 
with a coproduct, as is verified in \cite[Lemma 2.10]{AMP}. In that paper, an explicit resolution of $\St(\Q^n)$ due to Lee and Szczarba \cite{LS} is used  to compute group homology but the maps on homology do not depend on the choice of resolution. Note that $H_*(\GL_n(\Z) ; \St^{\det}(\Q^n))$ is trivial for $n$ odd by the ``centre kills'' trick.

\subsection{A Bieri--Eckmann duality description of the coproduct}

We now give an alternative description of this coproduct through Bieri--Eckmann duality \cite{BieriEckmann}, see \cref{thm:geometric-coproduct}. Recall that the coproduct involves two steps: 
\begin{enumerate}[(1)]
    \item the coproduct map on dualising modules followed by Shapiro's lemma, 
    \item\label{item:pr} the map induced by the projection $\rm{pr}$.
\end{enumerate}

We start with a Bieri--Eckmann dual description of (1). Let $V$ be an $n$-dimension $\Q$-vector space. Let $\St^{\rm{or}}(V)$ denote $\St(V)$ if $n$ is odd and $\St^{\det}(V)$ if $n$ is even.  Work of Borel--Serre \cite{BoSe}  implies that $\GL_n(\Z)$ and $P_{a,b}(\Z)$ are rational duality groups of dimension $\smash{\nu_n \coloneq {n \choose 2}}$. The rational dualising module of $\GL_n(\Z)$ is $\St^{\rm{or}}(\Q^n)$; see Putman--Studenmund \cite[Theorem C]{PutmanStudenmund}. The rational dualising module of $P_{a,b}(\Z)$ is $\St^{\det}(\Q^a) \otimes \St^{\det}(\Q^b)$ if $a$ and $b$ have the same parity, which may be seen applying \cref{thm:be-ses} to the short exact sequence 
\[1 \longrightarrow \bb{Z}^{ab} \longrightarrow P_{a,b}(\bb{Z}) \overset{\rm{pr}}\longrightarrow \rm{GL}_{a}(\bb{Z}) \times \rm{GL}_{b}(\bb{Z}) \longrightarrow 1,\]
in which the left term $\bb{Z}^{ab}$ is a Poincar\'e duality group and hence rational duality group with rational dualising module $\bb{Q}$ and $\rm{GL}_{a}(\bb{Z}) \times \rm{GL}_{b}(\bb{Z})$ is also a rational duality group with dualising module $\St^{\rm{or}}(\Q^a) \otimes \St^{\rm{or}}(\Q^b)$. As $(A,B) \in \GL_a(\Z) \times \GL_b(\Z)$ acts on the dualising module of $\Z^{ab}$ by $\det(A)^b\cdot \det(B)^{-a}$, the middle term has dualising module
\[ \St^{\rm{or}}(\Q^a) \otimes \St^{\rm{or}}(\Q^b) = \St^{\det}(\Q^a) \otimes \St^{\det}(\Q^b) \]
if $a$ and $b$ are both even, and
\[(\Q^{\det} \otimes\St^{\rm{or}}(\Q^a)) \otimes (\Q^{\det} \otimes \St^{\rm{or}}(\Q^b)) = \St^{\det}(\Q^a) \otimes \St^{\det}(\Q^b) \]
if $a$ and $b$ are both odd.

\smallskip

Take $n=a+b$, let $\widetilde C_i$ denote reduced cellular chains, and let $*$ denote the simplicial join. In \cite{Reeder}, Reeder defined the following map:
\begin{align*} s^{a,b} \colon \widetilde C_p(\T(\Q^n)) &\longrightarrow \widetilde C_{p-1}(\T(\Q^a) * \T(\Q^b) )\\
(V_0< \ldots <V_p) &\longmapsto \begin{cases}
(V_0< \ldots <V_{j-1}) * ((V_{j+1} \cap \Q^b)< \ldots <  (V_{p} \cap \Q^b)), & \text{if } V_j = \Q^a, \\
0, & \text{if }V_j \neq \Q^a \text{ for all }j.
\end{cases}\end{align*}
Here, we view $\Q^n$ as $\Q^a \oplus \Q^b$, where $\Q^a\subseteq \Q^n$ is the subspace of vectors whose last $b$ coordinates are zero and $\Q^b\subseteq \Q^n$ is the subspace of vectors whose first $a$ coordinates are zero. Reeder proved that $s^{a,b}$ induces a map on the Steinberg modules $\St^{\det}(\Q^n) \to \St^{\det}(\Q^a) \otimes \St^{\det}(\Q^b)$ if $n$ is even, and that this map is dual to the usual restriction map on cohomology \cite[Proposition 4.4]{Reeder}.

\begin{theorem}[Reeder] \label{lem:reeder} Let $n$ be even and $a+b =n$. Let $\rm{inc} \colon P_{a,b}(\Z) \to \GL_n(\Z)$ be the inclusion. There is a commutative square
\[\begin{tikzcd}H^i(\rm{GL}_n(\bb{Z}); \Q^\chi) \rar{\rm{inc}^*} \dar{\rm{BE}}[swap]{\cong} &[10pt] H^i(P_{a,b}(\Z);\Q^\chi) \dar{\rm{BE}}[swap]{\cong} \\[-5pt]
H_{\nu_n-i}(\rm{GL}_{n}(\bb{Z}); \St^{\det}(\Q^n)\otimes \Q^\chi) \rar{s^{a,b}_*} & H_{\nu_n-i}(P_{a,b}(\Z); \St^{\det}(\Q^a) \otimes \St^{\det}(\Q^b) \otimes \Q^\chi).\end{tikzcd}\]
\end{theorem}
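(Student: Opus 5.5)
Both Bieri--Eckmann isomorphisms in the square are cap products with fundamental classes, $x_n \in H_{\nu_n}(\GL_n(\Z);\St^{\det}(\Q^n))$ for $\GL_n(\Z)$ and $x_P \in H_{\nu_n}(P_{a,b}(\Z);\St^{\det}(\Q^a)\otimes\St^{\det}(\Q^b))$ for the parabolic. So the plan is to reduce commutativity to a single identity between these fundamental classes and then use naturality of the cap product.

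Let $c \in H^i(\GL_n(\Z);\Q^\chi)$. There are two ways around the square.

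\emph{Going down and then right} gives $s^{a,b}_*(x_n\cap c)$. This is read as an element of $H_*(\GL_n(\Z);\bigoplus_V \St^{\det}(V)\otimes\St^{\det}(\Q^n/V)\otimes\Q^\chi)$. Shapiro's lemma then identifies it with an element of $H_*(P_{a,b}(\Z);\dots)$, via the $P_{a,b}$-equivariant projection onto the summand $V=\Q^a$. Reeder's map is exactly that projection followed by the identification $\Q^n/\Q^a\cong \Q^b$. The Shapiro isomorphism $H_*(G;\mathrm{Ind}_P^G M)\cong H_*(P;M)$ is inverse to the transfer composed with the coefficient map. Hence this route equals $\mathrm{tr}(x_n)\cap \mathrm{inc}^*c$, up to the coefficient map $s^{a,b}$. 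Here $\mathrm{tr}$ denotes the transfer $H_*(\GL_n(\Z);-)\to H_*(P_{a,b}(\Z);-)$, which makes sense because the coefficients are induced. This step uses the projection formula
\[\mathrm{tr}(x)\cap \mathrm{inc}^*c=\mathrm{tr}(x\cap c).\]

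\emph{Going right and then down} gives $x_P\cap \mathrm{inc}^*c$.

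It therefore suffices to prove the single identity $(s^{a,b}\circ\mathrm{Sh})(x_n)=\pm x_P$ in $H_{\nu_n}(P_{a,b}(\Z);\St^{\det}(\Q^a)\otimes\St^{\det}(\Q^b))$, where $\mathrm{Sh}$ denotes the transfer/Shapiro map.

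This identity is the main obstacle, and I would prove it geometrically via Borel--Serre. The class $x_n$ is the fundamental class of the Borel--Serre bordification $\overline{X}$ of the symmetric space. Its boundary $\partial\overline{X}$ is homotopy equivalent to $\T(\Q^n)$, and the Steinberg module arises as $H_{\nu_n}(\overline{X},\partial\overline{X})$ via the boundary map to $\widetilde H_{n-2}(\T)$.

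Let $e(P)$ be the boundary face of $\overline{X}$ corresponding to the parabolic $P=P_{a,b}$. This face is a product $e(P)\simeq \overline{X}_{a}\times\overline{X}_b\times N$, where $N$ is the nilmanifold for $\Z^{ab}$. Its own boundary is a join of the Tits buildings of $\Q^a$ and $\Q^b$; the join arises as the link of the vertex $\Q^a$ in $\T(\Q^n)$.

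Restricting the fundamental class to a tubular neighbourhood of $e(P)$ gives the fundamental class of $P\backslash e(P)$. Passing to the $P$-quotient of the stabiliser is precisely what the Shapiro map does. On Steinberg coefficients, this restriction is the map to the link of the vertex $V_j=\Q^a$, with $V_{j+1}\cap\Q^b$ identifying flags above $\Q^a$ with flags of $\Q^n/\Q^a$. That is the formula defining $s^{a,b}$, and it lowers degree by one, matching the simplicial degree of the join.

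The dualising module for $P_{a,b}(\Z)$ involves a twist coming from $\Z^{ab}$, as computed just before the statement. I would track orientations using that computation, together with $n$ being even, to see that the signs agree up to a global $\pm$.

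The step I expect to be hardest is making this local restriction of orientation classes precise. What I would try first is an alternative that avoids geometry: for $i=0$, check directly on an apartment class that $s^{a,b}$ preserves the duality pairing. The identity on fundamental classes is a degree-$\nu_n$ statement about a single class, and $H^0$ pairing determines it. After that, the general case follows from naturality of the cap product.
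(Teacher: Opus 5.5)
There is a genuine gap: the identity $s^{a,b}_*(x_n)=\pm x_P$, which carries all the content of the statement, is never proved. Your formal reduction is fine. Shapiro's isomorphism is induced by a chain map and cap products are natural, so $s^{a,b}_*(x_n\cap c)=s^{a,b}_*(x_n)\cap\mathrm{inc}^*c$, and the square commutes up to sign once $s^{a,b}_*(x_n)=\pm x_P$. But this is only a reformulation. Since $H_{\nu_n}(P_{a,b}(\Z);\St^{\det}(\Q^a)\otimes\St^{\det}(\Q^b))\cong H^0(P_{a,b}(\Z))\cong\Q$, one knows a priori that $s^{a,b}_*(x_n)=\lambda x_P$ for some $\lambda\in\Q$. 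The case $i=0$, $\chi$ trivial, of the square (where $\mathrm{inc}^*$ is the identity of $\Q$) says precisely $\lambda=1$. So what must be shown is $\lambda=\pm1$, in particular $\lambda\neq0$. The paper does not reprove this; it quotes it from Reeder's Proposition 4.4.

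Your fallback cannot supply this. An apartment class is an element of $\St^{\det}(\Q^n)$ and only produces classes in $H_0(\GL_n(\Z);\St^{\det}(\Q^n))$, which is Bieri--Eckmann dual to $H^{\nu_n}$, not to $H^0$. The top-degree class $x_n$ is not reached by evaluating $s^{a,b}$ on apartments, and there is no pairing on the Steinberg module whose preservation by $s^{a,b}$ would determine $\lambda$. Saying ``the $H^0$ pairing determines it'' is circular, since the $H^0$ case is the identity to be proved.

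Your geometric route is the right mechanism, but as written it is a heuristic and the degrees are off. The Steinberg module is $H_{n-1}(\overline{X},\partial\overline{X})\cong\widetilde H_{n-2}(\partial\overline{X})$, not $H_{\nu_n}$. For a torsion-free finite-index $\Gamma\le\GL_n(\Z)$, the fundamental class of $\Gamma\backslash\overline{X}$ lives in degree $\dim X$. It corresponds to $x_n$ only after identifying $H_{\dim X}(\Gamma;C_*(\overline{X},\partial\overline{X}))$ with $H_{\nu_n}(\Gamma;\St\otimes\mathrm{or})$, using that $C_*(\overline{X},\partial\overline{X})$ has homology concentrated in degree $n-1$, and then a transfer argument to return rationally to $\GL_n(\Z)$. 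With $\Gamma_P=\Gamma\cap P_{a,b}(\Z)$, what must actually be proved is:
(a) Lefschetz duality on $\Gamma\backslash\overline{X}$ and on the codimension-one face $\Gamma_P\backslash\overline{e(P)}$ intertwines $\mathrm{inc}^*$ with the boundary map followed by excision onto that face;
(b) under the corresponding hyperhomology identifications for $\Gamma$ and $\Gamma_P$, this map is, on coefficients, the local homology map at the vertex $\Q^a$, namely $\widetilde H_{n-2}(\T(\Q^n))\to\widetilde H_{n-2}(\T(\Q^n),\T(\Q^n)\setminus U)\cong\widetilde H_{n-3}(\mathrm{lk}(\Q^a))\cong\St(\Q^a)\otimes\St(\Q^n/\Q^a)$. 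Here $U$ is the open star of $\Q^a$, the map is summed over the orbit of $\Q^a$ (that is, $s^{a,b}$ followed by Shapiro), and the orientation characters must match the dualising module of $P_{a,b}(\Z)$ computed before the statement.
You flag this as the hardest step but only assert it. Until (a) and (b) are carried out, nothing in your argument rules out $\lambda=0$.
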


We now compare Reeder's map $s^{a,b}$ and the map $\Delta_{a,b}$.

\begin{lemma} \label{s=Delta} After identifying $\Q^b$ with $\Q^n/ \Q^a$, the projection of 
\[\Delta_{a,b} \colon \St^{\det}(\Q^n)  \longrightarrow  \bigoplus_{V \subseteq \Q^n , \,\dim(V)=a} \St^{\det}(V)   \otimes \St^{\det}(\Q^n/V)\] 
to the summand associated to $V=\Q^a$ agrees with $s^{a,b} \colon\St^{\det}(\Q^n)\to \St^{\det}(\Q^a)  \otimes \St^{\det}(\Q^b) $.
\end{lemma}

\begin{proof}
    It suffices to check this claim on apartments, viewing $\Q^n$ as $\Q^a \oplus \Q^b$.    
    Observe that $s^{a,b}([\![v_1,\ldots,v_n]\!])=0$ if $v_1,\ldots,v_n$ does not contain a basis of $\Q^a$. Thus we assume $v_1,\ldots,v_n$ \emph{does} contain a basis of $\Q^a$, and then there is a unique $(a,b)$-shuffle $\sigma$ with $v_{\sigma(1)},\ldots,v_{\sigma(a)}$ a basis of $\Q^a$. If $\tau \in \fr{S}_n$ does not equal $\sigma$ modulo $\fr{S}_a \times \fr{S}_b$, then the flag \[ \Span(v_{\tau (1)}) < \Span(v_{\tau (1)},v_{\tau (2)})  < \ldots < \Span(v_{\tau (1)},\ldots, v_{\tau (n-1)}) \] is mapped to $0$ by $s^{a,b}$.
    If $\tau \in \fr{S}_n$ is equal to $\sigma$ modulo $\fr{S}_a \times \fr{S}_b$, then the flag \[ \Span(v_{\tau (1)}) < \Span(v_{\tau (1)},v_{\tau (2)})  < \ldots < \Span(v_{\tau (1)},\ldots, v_{\tau (n-1)}) \] is mapped by $s^{a,b}$ to \[\Big( \Span(v_{\tau (1)}) < \ldots < \Span(v_{\tau (1)},\ldots, v_{\tau (a-1)})    \Big)* \Big( \Span(v_{\tau (a+1)}) < \ldots < \Span(v_{\tau (a+1)},\ldots, v_{\tau (n-1)})    \Big). \]
It follows from this and the definition of apartment classes that  we have an equality
     \[s^{a,b}([\![v_1,\ldots,v_n]\!])=(-1)^\sigma [\![v_{\sigma(1)},\ldots,v_{\sigma(a)} ]\!] \otimes [\![v_{\sigma(a+1)},\ldots,v_{\sigma(n)} ]\!] \] in this case. This is exactly the projection of $\Delta_{a,b}([\![v_1,\ldots,v_n]\!])$.
\end{proof}

We next describe the fibre integration map of the projection map. 
Applying \cref{def:fibreint} to the present situation, we get the commutative square
\[\begin{tikzcd} 
H^{\nu_n-*}(P_{a,b}(\bb{Z})) \dar[swap]{\int_\rm{pr}}   \rar{\cong}[swap]{\rm{BE}} &[10pt] H_*(P_{a,b}(\bb{Z});\rm{St}^{\det}(\Q^a) \otimes \rm{St}^{\det}(\Q^b) )\dar{\rm{pr}_*} \\[-5pt]
  H^{\nu_a-*}(\rm{GL}_{a}(\bb{Z}))  \otimes H^{\nu_b-*}(\rm{GL}_{b}(\bb{Z})) \rar{\cong}[swap]{\rm{BE} \otimes \rm{BE}} & 
 H_*(\rm{GL}_{a}(\bb{Z});\rm{St}^{\det}(\Q^a)) \otimes H_*(\rm{GL}_{b}(\bb{Z});\rm{St}^{\det}(\Q^b)).
 \end{tikzcd}
\]
if $a$ and $b$ are both even, and the commutative square
\[\begin{tikzcd} 
H^{\nu_n-*}(P_{a,b}(\bb{Z});\Q^{\det}) \dar[swap]{\int_\rm{pr}}   \rar{\cong}[swap]{\rm{BE}} &[10pt] H_*(P_{a,b}(\bb{Z});\rm{St}(\Q^a) \otimes \rm{St}(\Q^b) )\dar{\rm{pr}_*} \\[-5pt]
  H^{\nu_a-*}(\rm{GL}_{a}(\bb{Z}))  \otimes H^{\nu_b-*}(\rm{GL}_{b}(\bb{Z})) \rar{\cong}[swap]{\rm{BE} \otimes \rm{BE}} & 
 H_*(\rm{GL}_{a}(\bb{Z});\rm{St}(\Q^a)) \otimes H_*(\rm{GL}_{b}(\bb{Z});\rm{St}(\Q^b)).
 \end{tikzcd}
\]
if $a$ and $b$ are both odd. Thus, by construction, $\int_\rm{pr}$ is Bieri--Eckmann dual to the projection map of \eqref{item:pr}. In conclusion, we get the following theorem.

\begin{theorem}\label{thm:geometric-coproduct} Let $n$ be even and $a+b=n$. 
\begin{enumerate}[\noindent (i)]
    \item If $a$ and $b$ are both even, there is a commutative square
\[\begin{tikzcd}[column sep=4em]
H^{\nu_n-*}(\rm{GL}_n(\bb{Z})) \rar{\int_\rm{pr} \circ \ \rm{inc}^*} \dar{\rm{BE}}[swap]{\cong} & H^{\nu_a-*}(\rm{GL}_{a}(\bb{Z})) \otimes H^{\nu_b-*}(\rm{GL}_{b}(\bb{Z})) \dar{\rm{BE} \otimes \rm{BE}}[swap]{\cong} \\[-5pt]
H_*(\rm{GL}_{a+b}(\bb{Z});\rm{St}^{\det}(\Q^n)) \rar{\Delta_{a,b}}  &[10pt] H_*(\rm{GL}_{a}(\bb{Z});\rm{St}^{\det}(\Q^a)) \otimes H_*(\rm{GL}_{b}(\bb{Z});\rm{St}^{\det}(\Q^b)) 
.\end{tikzcd}\]
\item If $a$ and $b$ are both odd, there is a commutative square
\[\begin{tikzcd}[column sep=4em]
H^{\nu_n-*}(\rm{GL}_n(\bb{Z});\Q^{\det}) \rar{\int_\rm{pr} \circ \ \rm{inc}^*} \dar{\rm{BE}}[swap]{\cong} & H^{\nu_a-*}(\rm{GL}_{a}(\bb{Z})) \otimes H^{\nu_b-*}(\rm{GL}_{b}(\bb{Z})) \dar{\rm{BE} \otimes \rm{BE}}[swap]{\cong} \\[-5pt]
H_*(\rm{GL}_{a+b}(\bb{Z});\rm{St}(\Q^n)) \rar{\Delta_{a,b}}  &[10pt] H_*(\rm{GL}_{a}(\bb{Z});\rm{St}(\Q^a)) \otimes H_*(\rm{GL}_{b}(\bb{Z});\rm{St}(\Q^b)) 
.\end{tikzcd}\]
\end{enumerate}
\end{theorem}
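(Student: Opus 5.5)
The plan is to factor the square into two squares and paste them together, following the two-step description of $\Delta_{a,b}$ recalled above. The first step is the coproduct on dualising modules followed by Shapiro's lemma. The second step is the map induced by $\rm{pr}$.

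\emph{The left square.} Consider the square
\[\begin{tikzcd}[column sep=4em]
H^{\nu_n-*}(\rm{GL}_n(\bb{Z});\Q^\chi) \rar{\rm{inc}^*} \dar{\rm{BE}} & H^{\nu_n-*}(P_{a,b}(\Z);\Q^\chi) \dar{\rm{BE}} \\[-5pt]
H_*(\rm{GL}_n(\Z);\St^{\det}(\Q^n)\otimes\Q^\chi) \rar{s^{a,b}_*} & H_*(P_{a,b}(\Z);\St^{\det}(\Q^a)\otimes\St^{\det}(\Q^b)\otimes\Q^\chi).
\end{tikzcd}\]
Here $\chi$ is trivial in case (i) and $\chi=\det$ in case (ii). This square commutes by \cref{lem:reeder}. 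In case (ii) one also uses $\St^{\det}\otimes\Q^{\det}\cong\St$ (for $\Q^n$, $\Q^a$ and $\Q^b$) to rewrite the bottom row with untwisted Steinberg modules.

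We then identify the bottom map with the first step of $\Delta_{a,b}$. The Shapiro isomorphism
\[H_*\Big(\GL_n(\Z);\bigoplus_V \St^{\chi}(V)\otimes\St^\chi(\Q^n/V)\Big)\cong H_*\big(P_{a,b}(\Z);\St^\chi(\Q^a)\otimes\St^\chi(\Q^b)\big)\]
composed with the map induced by $\Delta_{a,b}$ equals the map induced by $\rm{inc}$ together with the $P_{a,b}(\Z)$-equivariant map given by projecting $\Delta_{a,b}$ to the summand $V=\Q^a$. This is the standard description of Shapiro's lemma for an induced module: restrict to the stabiliser $P_{a,b}(\Z)$ of the summand $V=\Q^a$, then project onto that summand. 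By \cref{s=Delta}, this projection is exactly $s^{a,b}$. So the first step of $\Delta_{a,b}$ is $s^{a,b}_*$ on $P_{a,b}(\Z)$-homology.

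\emph{The right square.} This is the square for $\int_{\rm{pr}}$ displayed just before the theorem, with the case split according to whether $a,b$ are both even or both odd. It commutes by construction (\cref{def:fibreint}), using that the dualising module of $P_{a,b}(\Z)$ is $\St^{\det}(\Q^a)\otimes\St^{\det}(\Q^b)$. In the odd case one twists by $\Q^{\det}$. The bottom map $\rm{pr}_*$, followed by the Künneth isomorphism, is precisely step \eqref{item:pr} of the coproduct. Pasting the left and right squares gives the theorem.

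\emph{Main obstacle.} The delicate points are bookkeeping of twists and signs.
\begin{enumerate}[(a)]
\item In case (ii) one must check that the $\Q^{\det}$ twist of $P_{a,b}(\Z)$ is pulled back from $\GL_a(\Z)\times\GL_b(\Z)$. This holds because $\det$ of a block upper-triangular matrix is the product of the determinants of its diagonal blocks. Tracking this twist makes the fibre integration land in untwisted cohomology $H^{\nu_a-*}(\GL_a(\Z))\otimes H^{\nu_b-*}(\GL_b(\Z))$, since the orientation characters of the odd-dimensional factors cancel the twist.
\item One must check that the Bieri--Eckmann isomorphism for $\GL_a(\Z)\times\GL_b(\Z)$ agrees with $\rm{BE}\otimes\rm{BE}$. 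This holds up to sign, because the fundamental class of the product is the cross product of the fundamental classes.
\end{enumerate}
I would treat these compatibilities, together with the sign ambiguities already present in \cref{lem:be-rel-pullback}, as holding up to a global sign. That suffices for the nonvanishing applications that follow.
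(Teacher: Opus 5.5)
Your proposal is correct and is essentially the paper's argument: paste Reeder's square (\cref{lem:reeder}) to the fibre-integration square for $\rm{pr}$, which commutes by \cref{def:fibreint}, using \cref{s=Delta} to identify $s^{a,b}_*$ with the Shapiro step of $\Delta_{a,b}$. One small correction: $P_{a,b}(\Z)$ has infinite index in $\GL_n(\Z)$, so there is no restriction map in homology and the Shapiro step is not ``restrict to the stabiliser, then project''; it is Frobenius reciprocity---$\Delta_{a,b}$ is the unique $\GL_n(\Z)$-equivariant map into the induced module (the direct sum over rank-$a$ subspaces $V$) whose $V=\Q^a$ component is $s^{a,b}$, and the wrong-way map $s^{a,b}_*$ is precisely this map followed by the Shapiro isomorphism.
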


\section{Unipotent abelian cycles in $\Aut(F_n)$}

In this section, we describe a procedure for constructing nonzero homology classes for $\Aut(F_n)$. 

\begin{definition}\label{def:unipabcyc}
    Let $n$ be even and $a+b = n$. We call a map $\alpha\colon \Z^{ab} \to \Aut(F_n)$ a \emph{unipotent abelian cycle} if the composition 
    \[ \Z^{ab} \stackrel{\alpha}\longrightarrow \Aut(F_n) \stackrel{\rm{ab}}\longrightarrow \GL_n(\Z) \]
    is equal to the composition $\Z^{ab} \to P \to \GL_n(\Z)$, where $P$ is a subgroup of $\GL_n(\Z)$ that stabilizes a rank-$a$ summand $M$ of $\Z^n$ and $\Z^{ab} \to P$ is an injection whose image is the subgroup of invertible matrices that fix pointwise the elements of $M$ and $\Z^n/M$.
\end{definition}

A unipotent abelian cycle often gives a nonzero homology class of $\Aut(F_n)$:

\begin{theorem}\label{thm:unipabcyc}
    Let $n$ be even and $a+b = n$, and assume that $a \equiv 0,3 \pmod 4$ if $a=b$. Let $\alpha\colon \Z^{ab} \to \Aut(F_n)$ be a unipotent abelian cycle. 
    \begin{enumerate}[\noindent (i)]
        \item 
    If $a$ and $b$ are both even, then the following composition of induced maps is injective:
    \[ H_{ab}(\Z^{ab}) \stackrel{\alpha_*}\longrightarrow H_{ab}(\Aut(F_n)) \stackrel{\rm{ab}_*}\longrightarrow H_{ab}(\GL_n(\Z)).\]
\item 
    If $a$ and $b$ are both odd, then the following composition of induced maps is injective:
    \[ H_{ab}(\Z^{ab}) \stackrel{\alpha_*}\longrightarrow H_{ab}(\Aut(F_n);\Q^{\det}) \stackrel{\rm{ab}_*}\longrightarrow H_{ab}(\GL_n(\Z);\Q^{\det}).\]
    \end{enumerate}
\end{theorem}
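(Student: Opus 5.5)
The plan is to detect the image of the fundamental class by pairing it with a cohomology class of $\GL_n(\Z)$. The class will be produced by Bieri--Eckmann duality from a product in the Steinberg Hopf algebra of \cite{AMP}.

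\emph{Reduction to restriction.} Conjugating by an element of $\GL_n(\Z)$ moves $M$ to $\Z^a\subseteq\Z^n$ and $P$ into $P_{a,b}(\Z)$; conjugation acts trivially on the homology of $\GL_n(\Z)$, so we may assume this. By \cref{def:unipabcyc}, the composition $\rm{ab}\circ\alpha$ is then the inclusion of the unipotent radical $U\cong\Z^{ab}$ into $P_{a,b}(\Z)\subseteq\GL_n(\Z)$. Since $H_{ab}(\Z^{ab})\cong\Q$ is one-dimensional, it suffices to find a class $c\in H^{ab}(\GL_n(\Z);\Q^\chi)$ whose restriction to $U$ is nonzero. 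Here $\chi$ is trivial in case (i) and $\det$ in case (ii); in case (ii) note $\Q^{\det}$ restricts trivially to $U$.

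Observe that $ab=\nu_n-\nu_a-\nu_b$. Applying \cref{lem:be-rel-pullback} to $1\to U\to P_{a,b}(\Z)\xrightarrow{\rm{pr}}\GL_a(\Z)\times\GL_b(\Z)\to 1$ with $d=ab$, the restriction $H^{ab}(P_{a,b}(\Z))\to H^{ab}(U)$ agrees up to sign with $\int_{\rm{pr}}$ followed by an injection. Hence $c|_U\neq 0$ if and only if
\[\textstyle\int_{\rm{pr}}\rm{inc}^*c\neq 0\quad\text{in } H^0(\GL_a(\Z))\otimes H^0(\GL_b(\Z)).\]

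\emph{Translation to the coproduct.} By \cref{thm:geometric-coproduct} with $*=\nu_a+\nu_b$, this becomes a statement about Steinberg homology. We need $x\in H_{\nu_a+\nu_b}(\GL_n(\Z);\St^{\rm{or}}(\Q^n))$ such that $\Delta_{a,b}(x)$ is nonzero in
\[H_{\nu_a}(\GL_a(\Z);\St^{\rm{or}}(\Q^a))\otimes H_{\nu_b}(\GL_b(\Z);\St^{\rm{or}}(\Q^b)).\]
Each tensor factor is Bieri--Eckmann dual to $H^0=\Q$. Note that $\St^{\det}=\St^{\rm{or}}$ for even rank and $\St=\St^{\rm{or}}$ for odd rank, which is exactly the coefficient system appearing in (i) and (ii) respectively. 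Let $e_m\in H_{\nu_m}(\GL_m(\Z);\St^{\rm{or}}(\Q^m))$ be the generator dual to $1\in H^0$. I take $x=e_a\cdot e_b$, the product in the Hopf algebra of \cite{AMP}.

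\emph{Computing the coproduct.} Each $e_m$ is primitive. Indeed, a component of $\Delta_{c,m-c}(e_m)$ with $0<c<m$ would lie in total degree $\nu_m>\nu_c+\nu_{m-c}$, where the target vanishes by cohomological dimension. By the bialgebra compatibility proven in \cite{AMP}, $\Delta(e_ae_b)=(e_a\otimes1+1\otimes e_a)(e_b\otimes1+1\otimes e_b)$. Its $(a,b)$-component is $e_a\otimes e_b$ when $a\neq b$, which is nonzero. When $a=b$ it is $e_a\otimes e_a+\varepsilon\, e_a\otimes e_a$, where $\varepsilon$ is the Koszul sign coming from the twisted multiplication in $H\otimes H$. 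This sign depends on the homological degree $\nu_a$ and on the internal rank sign built into the Hopf structure; the remaining cross terms land in other bidegrees and drop out.

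\emph{Main obstacle.} The hardest step is this sign bookkeeping. I would determine the braiding convention of \cite{AMP}, which I expect to include a sign from the rank grading as well as from $(-1)^{\nu_a\nu_a}$, and check that $1+\varepsilon\neq 0$ exactly when $a\equiv 0,3\pmod 4$, consistent with $\nu_a=\binom a2$ being even precisely for $a\equiv 0,1\pmod 4$ combined with such a rank sign. I would also verify that the signs in \cref{lem:be-rel-pullback} and \cref{lem:reeder} are harmless, which holds since only nonvanishing matters. With $x$ in hand, set $c=\rm{BE}^{-1}(x)$; then $c|_U\neq0$, so $c$ pairs nontrivially with $\rm{ab}_*\alpha_*[\Z^{ab}]$, which gives the claimed injectivity.
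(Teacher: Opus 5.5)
Your proposal is correct and follows the paper's proof essentially step for step. You reduce to $P_{a,b}(\Z)$, apply \cref{lem:be-rel-pullback} to the extension by $\Z^{ab}$, translate via \cref{thm:geometric-coproduct}, and compute the coproduct of the product of the primitive top classes in the Hopf algebra of \cite{AMP}. The only cosmetic difference is that you stay on the cohomology side (showing the restriction to $U$ is nonzero) rather than dualising as in \cref{lem:fundclass}.

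The sign you defer appears in the paper as $(-1)^{(\nu_a+a)(\nu_b+b)}$. For $a=b$ this equals $(-1)^{\binom{a+1}{2}}$, which is $+1$ exactly when $a\equiv 0,3 \pmod 4$, as you anticipated.
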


Using a change of basis, we may assume that $P=P_{a,b}(\bb{Z})$. By definition of unipotent abelian cycles, \cref{thm:unipabcyc} follows immediately from the following statements:

\begin{proposition}\label{prop:uniabcyclesnonzero}
Let $n$ be even and $a+b = n$, and assume that $a \equiv 0,3 \pmod 4$ if $a=b$:
\begin{enumerate}[\noindent (i)]
    \item If $a$ and $b$ are both even, then the following composition of induced maps is injective:
    \[H_{ab}(\Z^{ab}) \longrightarrow H_{ab}(P_{a,b}(\Z)) \longrightarrow H_{ab}(\GL_n(\Z)).\]
    \item 
    If $a$ and $b$ are both odd, then the following composition of induced maps is injective:
    \[H_{ab}(\Z^{ab}) \longrightarrow H_{ab}(P_{a,b}(\Z);\Q^{\det}) \longrightarrow H_{ab}(\GL_n(\Z);\Q^{\det}).\]
    \end{enumerate}
\end{proposition}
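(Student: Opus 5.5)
Since $H_{ab}(\Z^{ab})\cong\Q$ is one-dimensional, injectivity amounts to showing the fundamental class $[\Z^{ab}]$ has nonzero image in $H_{ab}(\GL_n(\Z);\Q^\chi)$, with $\chi$ trivial in case (i) and $\chi=\det$ in case (ii). I will find a cohomology class $\omega\in H^{ab}(\GL_n(\Z);\Q^\chi)$ with $\langle \omega, \rm{inc}_*[\Z^{ab}]\rangle\neq 0$. The pairing equals $\langle \rm{inc}^*\omega|_{\Z^{ab}}, [\Z^{ab}]\rangle$, so it suffices to show that the restriction of $\omega$ to $H^{ab}(\Z^{ab})\cong\Q$ is nonzero.

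\textbf{Step 1: reduce to fibre integration.} Apply \cref{lem:be-rel-pullback} to $1\to\Z^{ab}\to P_{a,b}(\Z)\xrightarrow{\rm{pr}}\GL_a(\Z)\times\GL_b(\Z)\to1$ with $d=ab$. Restriction $H^{ab}(P_{a,b}(\Z);\Q^\chi)\to H^{ab}(\Z^{ab})$ then factors, up to sign and an injective map, through $\int_{\rm{pr}}$ into $H^0(\GL_a(\Z)\times\GL_b(\Z);-)$. So it suffices that $\int_{\rm{pr}}\rm{inc}^*\omega\ne0$ in degree $0$. Note $ab=\nu_n-\nu_a-\nu_b$, so the target is $H^{\nu_a-\nu_a}\otimes H^{\nu_b-\nu_b}$, i.e. $H^0\otimes H^0\cong\Q$.

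\textbf{Step 2: translate to the coproduct.} By \cref{thm:geometric-coproduct}, the composite $\int_{\rm{pr}}\circ\rm{inc}^*$ on $H^{ab}=H^{\nu_n-*}$ with $*=\nu_a+\nu_b$ is Bieri--Eckmann dual to the summand of $\Delta_{a,b}$ landing in $H_{\nu_a}(\GL_a(\Z);\St^{\chi'}(\Q^a))\otimes H_{\nu_b}(\GL_b(\Z);\St^{\chi'}(\Q^b))$. Each of these top Steinberg homology groups is BE-dual to $H^0=\Q$. It therefore suffices to produce a class $z\in H_{\nu_a+\nu_b}(\GL_n(\Z);\St^{\chi'}(\Q^n))$ whose $(a,b)$-component of $\Delta$ is $e_a\otimes e_b$ times a nonzero scalar, where $e_m$ generates $H_{\nu_m}(\GL_m(\Z);\St^{\chi'}(\Q^m))\cong\Q$. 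I take $z=e_a\cdot e_b$, the product in the Hopf algebra of \cite{AMP}. Then $\omega:=\rm{BE}^{-1}(z)$.

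\textbf{Step 3 (main obstacle): the Hopf algebra computation.} Using compatibility of product and coproduct (the bialgebra axiom from \cite{AMP}), I will expand $\Delta_{a,b}(e_a\cdot e_b)$ as a sum over decompositions $\Delta(e_a)=\sum e'\otimes e''$ and $\Delta(e_b)=\sum f'\otimes f''$ in bidegrees summing to $(a,b)$, with Koszul signs. In the top homological degree, components of $\Delta(e_m)$ in $(p,m-p)$ land in degree $\nu_m>\nu_p+\nu_{m-p}$ unless $p\in\{0,m\}$. Those components vanish by the degree bound $H_i(\GL_p;\St)=0$ for $i>\nu_p$. So $\Delta(e_m)$ is primitive-like in top degree, and only two terms survive:
\begin{itemize}
\item $e_a\otimes e_b$;
\item when $a=b$, the swapped term $(-1)^{\epsilon}e_b\otimes e_a$.
\end{itemize}
For $a\ne b$ the result is immediate. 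For $a=b$, the output is $(1+(-1)^\epsilon)e_a\otimes e_a$. The sign $\epsilon$ combines the homological Koszul sign $\nu_a^2$ with the sign of the block shuffle acting on the (twisted) Steinberg apartments, which is $(-1)^{a^2}$ (times $\det$ in the twisted case). I expect this to give $(-1)^{\nu_a+a}$ or a similar expression, nonvanishing exactly when $a\equiv0,3\pmod 4$, matching the hypothesis. Pinning down this sign carefully, including the twisting for odd $a,b$, is the delicate part. I would check it against the apartment formula for $\Delta_{a,b}$ and the explicit product of apartments $[\![v]\!]\cdot[\![w]\!]=[\![v,w]\!]$.
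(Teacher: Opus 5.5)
Your proposal is correct and follows the paper's proof essentially step for step. Step 1 is \cref{lem:fundclass} (via \cref{lem:be-rel-pullback}), Step 2 is the pairing identity from \cref{thm:geometric-coproduct} with $x=\rm{BE}^{-1}(t_at_b)$, Step 3 is the primitivity-plus-bialgebra computation, and the sign you left to pin down is, in the paper, the braiding sign $(-1)^{(\nu_a+a)(\nu_b+b)}$ of the Hopf algebra of \cite{AMP}, which for $a=b$ equals $(-1)^{\nu_a+a}=(-1)^{a(a+1)/2}$, exactly as you predicted.
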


Before proving this proposition, let us prove a useful lemma.

\begin{lemma}\label{lem:fundclass} 
Let $n$ be even and $a+b = n$.
\begin{enumerate}[\noindent (i)]
    \item \label{enum:fundclass-i} If $a$ and $b$ are both even, the image of the fundamental class through  \[H_{ab}(\Z^{ab}) \longrightarrow H_{ab}(P_{a,b}(\Z)) \]
    is given by applying $({\textstyle \int}_\rm{pr})^\vee$ to the generator $1 \otimes 1 \in H_0(\rm{GL}_{a}(\bb{Z})) \otimes H_0(\rm{GL}_b(\bb{Z}))$ up to a sign.
    \item \label{enum:fundclass-ii} If $a$ and $b$ are both odd, the image of the fundamental class through  \[H_{ab}(\Z^{ab}) \longrightarrow H_{ab}(P_{a,b}(\Z);\Q^{\det})  \]
    is given by applying $ ({\textstyle \int}_\rm{pr})^\vee$ to the generator $1 \otimes 1 \in H_0(\rm{GL}_{a}(\bb{Z})) \otimes H_0(\rm{GL}_b(\bb{Z}))$ up to a sign.
\end{enumerate}
\end{lemma}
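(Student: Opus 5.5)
Since $\int_{\rm{pr}}$ is defined through Bieri--Eckmann duality, its dual $(\int_{\rm{pr}})^\vee$ is the map
\[ H_0(\GL_a(\Z))\otimes H_0(\GL_b(\Z)) \longrightarrow H_{ab}(P_{a,b}(\Z);A), \qquad A=\Q \text{ or } \Q^{\det}, \]
dual under the universal coefficient pairing to $\int_{\rm{pr}}\colon H^{ab}(P_{a,b}(\Z);A^\vee)\to H^0(\GL_a(\Z)\times\GL_b(\Z);D_K\otimes A^\vee)$. Here $K=\Z^{ab}$ has dimension $d=ab$, and $D_K=\Q^{\det^b\otimes\det^{-a}}$. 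This character is trivial when $a,b$ are both even and is $\Q^{\det}$ (the total determinant) when $a,b$ are both odd. In both cases the twist is cancelled by $A$, so the target is $H^0(\GL_a(\Z)\times \GL_b(\Z);\Q)=\Q$. The plan is to identify the image of the fundamental class $[\Z^{ab}]$ by testing it against an arbitrary class $\phi\in H^{ab}(P_{a,b}(\Z);A^\vee)$. Since $\Q$ is a field and all homology is finite-dimensional, a homology class is determined by these pairings.

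First step: compute the pairing of $\rm{inc}_*[\Z^{ab}]$ with $\phi$. It equals $\langle \rm{inc}^*\phi, [\Z^{ab}]\rangle$. Note that $A$ restricts trivially to $K$, since $K$ is unipotent and lies in $\SL$. So $\rm{inc}^*\phi$ lies in $H^{ab}(K)\otimes A^\vee\cong \Q$, and pairing with the fundamental class is the isomorphism $H^{ab}(K)\cong\Q$.

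Second step: apply \cref{lem:be-rel-pullback} with $d=ab$, $Q=\GL_a(\Z)\times\GL_b(\Z)$ and the $Q$-module $A^\vee$. It says that, up to sign, $\rm{inc}^*\phi$ equals the image of $\int_{\rm{pr}}\phi$ under
\[ H^0(Q;D_K\otimes A^\vee)\xrightarrow{\rm{BE}} H^0(Q;H^{ab}(K)\otimes A^\vee)\hookrightarrow H^{ab}(K)\otimes A^\vee. \]
For the Poincaré duality group $K=\Z^{ab}$ with orientation module $D_K$, the map $\rm{BE}$ in degree $0$ is cap product with the fundamental class. It therefore identifies $D_K=H_0(K;D_K)$ with $H^{ab}(K)$, compatibly with the $Q$-actions. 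Hence, up to a global sign,
\[ \langle \rm{inc}^*\phi,[\Z^{ab}]\rangle = \langle {\textstyle\int_{\rm{pr}}}\phi, 1\otimes 1\rangle = \langle \phi, ({\textstyle\int_{\rm{pr}}})^\vee(1\otimes 1)\rangle. \]
Since $\phi$ was arbitrary, this gives both (i) and (ii), which differ only in the bookkeeping of characters.

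The main obstacle is the normalisation. We must check that the Bieri--Eckmann map $D_K\to H^{ab}(K)$ in \cref{lem:be-rel-pullback} is the standard Poincaré duality isomorphism, under which the generator $1\in H_0$ corresponds to the dual of the fundamental class. We also need the identifications of $D_K\otimes A^\vee$ with $\Q$ to be $Q$-equivariant, so that $1\otimes 1$ really corresponds to the class used above. The first I would verify directly from the definition: for $K$, the class $x\in H_{ab}(K;D_K)$ is the orientation class, and capping it with a top-degree cohomology class is evaluation. The remaining ambiguities are signs, which the statement allows.
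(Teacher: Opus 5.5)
Your proposal is correct and follows essentially the same route as the paper: apply \cref{lem:be-rel-pullback} to $1\to\Z^{ab}\to P_{a,b}(\Z)\to\GL_a(\Z)\times\GL_b(\Z)\to 1$, note that the character through which $\GL_a(\Z)\times\GL_b(\Z)$ acts on $H^{ab}(\Z^{ab})$ is cancelled by the coefficients (trivial, respectively $\Q^{\det}$), conclude that $\rm{inc}^*$ and $\int_{\rm{pr}}$ are the same functional up to sign, and dualise. Your extra remark that, for $\Z^{ab}$, the Bieri--Eckmann map is cap product with the fundamental class (i.e.\ evaluation) spells out a normalisation the paper leaves implicit.
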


\begin{proof}
We will apply \cref{lem:be-rel-pullback} to the short exact sequence
\[ 1 \longrightarrow \Z^{ab}\xrightarrow{\rm{inc}} P_{a,b}(\Z) \xrightarrow{\rm{pr}} \GL_a(\Z) \times \GL_b(\Z) \longrightarrow 1.\]
Note that $H^{ab}(\Z^{ab})$ is one-dimensional, and $\GL_a(\Z) \times \GL_b(\Z)$ acts on it trivially in case (1) and via the determinant in case (2).

Thus, in case \eqref{enum:fundclass-i}, we see that
\[H^{ab}(P_{a,b}(\Z)) \xrightarrow{\rm{inc}^*} H^{ab}(\Z^{ab}) \cong \Q \quad \text{and} \quad 
H^{ab}(P_{a,b}(\Z)) \xrightarrow{\int_{\rm{pr}}} H^{0}(\GL_a(\Z) \times \GL_b(\Z)) \cong \Q\]
agree up to a sign. In case \eqref{enum:fundclass-ii}, we similarly see that
\begin{align*}&H^{ab}(P_{a,b}(\Z);\Q^{\det}) \xrightarrow{\rm{inc}^*} H^{ab}(\Z^{ab}) \otimes \Q^{\det} \cong \Q \quad \text{and} \\
&H^{ab}(P_{a,b}(\Z);\Q^{\det}) \xrightarrow{\int_{\rm{pr}}} H^{0}(\GL_a(\Z) \times \GL_b(\Z); H^{ab}(\Z^{ab}) \otimes \Q^{\det}) \cong \Q\end{align*}
agree up to a sign. Dualising both observations yields the statement.
\end{proof}

\begin{proof}[Proof of \cref{prop:uniabcyclesnonzero}] By \cref{lem:fundclass}, the image of the fundamental class---in $H_{ab}(P_{a,b}(\Z))$ if $a$ and $b$ are even and in $H_{ab}(P_{a,b}(\Z);\Q^{\det})$ if $a$ and $b$ are odd---can (up to sign) be described as $ ({\textstyle \int}_\rm{pr})^\vee(1 \otimes 1)$, where $1 \otimes 1 \in H_0(\rm{GL}_{a}(\bb{Z})) \otimes H_0(\rm{GL}_b(\bb{Z}))$ is the canonical generator. Thus by \cref{thm:geometric-coproduct}, \[\langle x,\rm{inc}_* ({\textstyle \int}_\rm{pr})^\vee(1 \otimes 1) \rangle = \langle {\textstyle \int}_\rm{pr} \rm{inc}^*(x),1 \otimes 1 \rangle = \langle (\rm{BE} \otimes \rm{BE})^{-1} \Delta_{a,b}(\rm{BE}(x)),1 \otimes 1 \rangle\]
    for any $x \in H^{ab}(\rm{GL}_{n}(\Z))$ if $a$ and $b$ are even, and any $x \in H^{ab}(\rm{GL}_{n}(\Z);\Q^{\det})$ if $a$ and $b$ are odd.
    
Writing $1^\vee \in H^0(\GL_k(\Z))$ for the generator, we denote $\rm{BE}(1^\vee) \in H_{\nu_k}(\GL_k(\Z);\St^{\rm{or}}(\Q^k))$ as $t_k$. For degree reasons, the classes $t_k$ are primitive \cite[Proposition 4.2]{AMP}. We now take $x = \rm{BE}^{-1}(t_{a}t_{b})$ where $t_{a}t_{b}$ is formed with respect to the product 
\[H_{\nu_a}(\GL_a(\Z);\St^{\rm{or}}(\Q^a)) \otimes H_{\nu_b}(\GL_b(\Z);\St^{\rm{or}}(\Q^b)) \longrightarrow H_{\nu_a + \nu_b}(\GL_n(\Z);\St^{\rm{or}}(\Q^n))\]
of \cite{AMP}. Note that here we use that $a$ and $b$ have the same parity.

Then, the above evaluation is nonzero if and only if $\Delta_{a,b}(t_{a}t_{b})$ has a nonzero $t_{a}\otimes t_{b}$-component, because this is the only element taken by $(\rm{BE} \otimes \rm{BE})^{-1}$ to $1^\vee \otimes 1^\vee\in H^0(\GL_a(\Z)) \otimes H^0(\GL_b(\Z))$. As the product and coproduct assemble to a Hopf algebra structure \cite[Theorem A]{AMP} and the classes $t_{a}$ and $t_{b}$ are primitive, we compute
\[\Delta(t_{a}t_{b}) = (1 \otimes t_{a}+t_{a} \otimes 1)(1 \otimes t_{b}+t_{b} \otimes 1) = 1 \otimes t_{a}t_{b}+t_{a} \otimes t_{b}+(-1)^{(\nu_a+a)(\nu_b+b)} t_{b} \otimes t_{a} + t_{a}t_{b} \otimes 1.\]
This has a nonzero $t_{a}\otimes t_{b}$-component if $a \neq b$ (e.g.~because for $a \neq b$, $t_{a}\otimes t_{b}$ lies in a different summand of the tensor product than $t_{b}\otimes t_{a}$). Moreover, if $a=b$, then $(\nu_a +a) = (\nu_b+b)$ is even if and only if $a \equiv 0,3 \pmod 4$. This completes the proof of \cref{prop:uniabcyclesnonzero}.
%\ref{thm:nonzeroparabolic}.
\end{proof}

\section{The Morita classes} 

In this section, we apply \cref{thm:unipabcyc} to show that Morita classes are nonzero by verifying that they are represented by unipotent abelian cycles (\cref{def:unipabcyc}).

\subsection{Morita classes and their images} We start by recalling the description of the Morita classes from Conant--Hatcher--Kassabov--Vogtmann \cite[Proposition 4.4]{CHKV}. Denote the generators of the free group $F_{2k+2}$ by $x_1,\ldots,x_{2k+2}$ and consider the following $4k$ automorphisms: each of them is the identity on all generators except for
\begin{align*}\lambda_i \colon x_i &\longmapsto x_{2k+1}x_i \qquad \text{for $1 \leq i \leq 2k$}, \\
\rho_i \colon x_i &\longmapsto x_ix_{2k+2} \qquad \text{for $1 \leq i \leq 2k$}.\end{align*}
These visibly commute so they induce a homomorphism $\phi \colon \bb{Z}^{4k} \to \rm{Aut}(F_{2k+2})$.

\begin{definition}\label{def:morita-class} The \emph{Morita class} $\mu_k \in H_{4k}(\rm{Aut}(F_{2k+2}))$ is the image of the fundamental class $[\bb{Z}^{4k}] \in H_{4k}(\bb{Z}^{4k})$ under $\phi_*$.
\end{definition}

We will prove that this class is nontrivial for $k \geq 2$ (Theorem \ref{athm:main}) by observing that $\phi$ is a unipotent abelian cycle and applying \cref{thm:unipabcyc}.  Note that indeed, $\rm{ab} \circ \phi$ factors through 
\[ \Z^{4k} \longrightarrow P_{2k,2}(\bb{Z}) \longrightarrow \GL_{2k+2}(\Z)\]
and thus is a unipotent abelian cycle. Because $2k \neq 2$ for $k \ge 2$, \cref{thm:unipabcyc} implies that 
\[ H_{4k}(\Z^{4k}) \xrightarrow{\phi_*} H_{4k}(\Aut(F_{2k+2}) ) \xrightarrow{\rm{ab}} H_{4k}(\GL_{2k+2}(\Z))\]
is injective. This implies Theorem \ref{athm:main}.

\subsection{Twisted Morita classes}
A second family of easy-to-define unipotent abelian cycles
\[ \psi \colon \Z^{2k-1} \longrightarrow \Aut(F_{2k})\]
is given by the commuting automorphisms that are the identity on all generators except for
\[\lambda_i \colon x_i \longmapsto x_{2k} x_i \qquad \text{for $1 \leq i \leq 2k-1$.} \]
As $2k-1\neq 1$ for $k\ge2$ and both are odd, \cref{thm:unipabcyc} implies that
\[ H_{2k-1}(\Z^{2k-1}) \xrightarrow{\psi_*} H_{2k-1}(\Aut(F_{2k});\Q^{\det}) \xrightarrow{\rm{ab}} H_{2k-1}(\GL_{2k}(\Z);\Q^{\det})\]
is injective. Let $\alpha_k$ be the image of the fundamental class $\psi_*([\Z^{2k-1}])$. This establishes Theorem \ref{odd}.

\begin{remark}
    In \cite[Conjecture 14]{BrunWillwacher}, Brun--Willwacher ask whether different cycles that they construct in $H_{2k-1}(\Out(F_{2k});\Q^{\det})$ are nonzero. We believe these are also unipotent abelian cycles and thus nonzero, but will not prove this here. In any case, our methods do not allow us to distinguish Brun--Willwacher's classes from the classes $\alpha_k$.
\end{remark}

\bibliographystyle{amsalpha}
\bibliography{./refs}

\bigskip

\end{document}